\theoremstyle{plain}
\newtheorem{thm}{Theorem}[section]
\newtheorem{theorem}[thm]{Theorem}
\newtheorem{corollary}[thm]{Corollary}
\theoremstyle{definition}
\newtheorem{definition}[thm]{Definition}
\newtheorem{rem}[thm]{Remark}
\title{A new proof of the duality of multiple zeta values and its generalizations}
\author{Shin-ichiro Seki and Shuji Yamamoto}
\address{Mathematical Institute, Tohoku University, 6-3, Aoba, Aramaki, Aoba-Ku, 
Sendai, 980-8578, Japan}
\email{shinichiro.seki.b3@tohoku.ac.jp}
\address{Keio Institute of Pure and Applied Sciences (KiPAS), Graduate School of
Science and Technology, Keio University, 3-14-1 Hiyoshi, Kohoku-ku, 
Yokohama, 223-8522, Japan}
\email{yamashu@math.keio.ac.jp}
\thanks{This work was supported in part by JSPS KAKENHI Grant Numbers 
JP18J00151, JP16H06336, JP16K13742, JP18K03221, 
as well as the KiPAS program 2013--2018 of the Faculty of Science and Technology 
at Keio University. }
\subjclass[2010]{11M32, 11B65.}
\keywords{Multiple zeta values; $q$-multiple zeta values; duality; sum formula; Ohno's relation}
\begin{document}
	
\begin{abstract}
We give a new proof of the duality of multiple zeta values, 
which makes no use of the iterated integrals. 
The same method is also applicable to Ohno's relation 
for ($q$-)multiple zeta values. 
\end{abstract}
	
\maketitle

\section{Introduction}
We call a tuple of positive integers $\boldsymbol{k}=(k_1, \dots, k_r)$ 
an \emph{index} and 
define its \emph{weight} $\mathrm{wt}(\boldsymbol{k})$ to be $k_1+\cdots+k_r$. 
An index $\boldsymbol{k}$ is called \emph{admissible} when $k_r \geq 2$. 
For an admissible index $\boldsymbol{k}$, the \emph{multiple zeta value} 
$\zeta(\boldsymbol{k})$ is defined as a convergent series
\[
\zeta(\boldsymbol{k}):=\sum_{0 < m_1 < \cdots < m_r}\frac{1}{m_1^{k_1}\cdots m_r^{k_r}}.
\]
Given an admissible index $\boldsymbol{k}$, we write it in the form 
\[
\boldsymbol{k}=(\{1\}^{a_1-1}, b_1+1, \dots, \{1\}^{a_s-1}, b_s+1), 
\]
where $a_1, \dots, a_s, b_1, \dots, b_s$ are positive integers 
and $\{1\}^a$ means $1,\ldots,1$ repeated $a$ times, 
and then define its \emph{dual index} $\boldsymbol{k}^{\dagger}$ by
\[
\boldsymbol{k}^{\dagger}=(\{1\}^{b_s-1}, a_s+1, \dots, \{1\}^{b_1-1}, a_1+1). 
\]
The following relation is one of the most famous relations among 
multiple zeta values.

\begin{theorem}[Duality]
Let $\boldsymbol{k}$ be an admissible index and $\boldsymbol{k}^{\dagger}$ its dual. 
Then we have
\[
\zeta(\boldsymbol{k})=\zeta(\boldsymbol{k}^{\dagger}).
\]
\end{theorem}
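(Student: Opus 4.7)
The plan is to use the \emph{connected sum} (or transport) method, which fits the abstract's promise of an iterated-integral-free proof. For any two indices $\boldsymbol{k}=(k_1,\ldots,k_r)$ and $\boldsymbol{l}=(l_1,\ldots,l_s)$ (not necessarily admissible, either possibly empty), I would introduce a two-sided sum
\[
Z(\boldsymbol{k};\boldsymbol{l}):=\sum_{\substack{0<m_1<\cdots<m_r\\ 0<n_1<\cdots<n_s}}\frac{C(m_r,n_s)}{m_1^{k_1}\cdots m_r^{k_r}\,n_1^{l_1}\cdots n_s^{l_s}},
\]
where $C(m,n)$ is a carefully chosen \emph{connector}, the natural candidate being the Beta-type factor $m!\,n!/(m{+}n)!$ extended by $C(m,0)=C(0,n)=1$. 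The connector is chosen so that (i) the boundary values $Z(\boldsymbol{k};\emptyset)=\zeta(\boldsymbol{k})$ and $Z(\emptyset;\boldsymbol{l})=\zeta(\boldsymbol{l})$ hold on inspection, and (ii) the estimate $C(m,n)\leq 1/\binom{m+n}{m}$ forces absolute convergence of the double sum even when neither $\boldsymbol{k}$ nor $\boldsymbol{l}$ is admissible by itself.

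The engine of the proof is a pair of \emph{transport identities} that move a unit of "weight" across the connector:
\begin{align*}
Z\bigl((k_1,\ldots,k_r);\boldsymbol{l}\bigr) &= Z\bigl((k_1,\ldots,k_r-1);(1,\boldsymbol{l})\bigr)\qquad (k_r\geq 2),\\
Z\bigl((k_1,\ldots,k_{r-1},1);\boldsymbol{l}\bigr) &= Z\bigl((k_1,\ldots,k_{r-1});(l_1+1,l_2,\ldots,l_s)\bigr).
\end{align*}
Each should follow from a single partial-fraction / Abel-summation identity applied to $C$, essentially the recurrence
\[
\frac{m!\,n!}{(m+n)!}-\frac{(m-1)!\,n!}{(m+n-1)!}=-\frac{n\,(m-1)!\,n!}{(m+n)!},
\]
telescoped in the appropriate variable and recombined with the $1/m_r$ (or freshly introduced $1/n_{\mathrm{new}}$) factor.

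Given the two transports, duality follows by iteration. Writing $\boldsymbol{k}=(\{1\}^{a_1-1},b_1+1,\ldots,\{1\}^{a_s-1},b_s+1)$ and starting from $\zeta(\boldsymbol{k})=Z(\boldsymbol{k};\emptyset)$, this presentation dictates the exact sequence of transport moves: each block $(\{1\}^{a_i-1},b_i+1)$ on the left is translated, in reverse block order, into the reflected block $(\{1\}^{b_i-1},a_i+1)$ on the right. After finitely many steps the left index is empty and the right index has become exactly $\boldsymbol{k}^\dagger$, yielding $\zeta(\boldsymbol{k})=Z(\emptyset;\boldsymbol{k}^\dagger)=\zeta(\boldsymbol{k}^\dagger)$. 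The hard part will be pinning down a single connector $C$ that makes \emph{both} transports work with one formula while preserving absolute convergence at every intermediate stage; the partial-fraction identity itself is elementary, but the simultaneous compatibility with the "$k_r\geq 2$" and "$k_r=1$" moves is the delicate design choice that really carries the theorem. Once the connector is fixed, the combinatorial bookkeeping of the iteration is purely formal.
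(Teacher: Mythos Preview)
Your strategy is exactly the paper's: same connector $C(m,n)=m!\,n!/(m+n)!$, same telescoping recurrence, same iteration from $Z(\boldsymbol{k};\varnothing)$ to $Z(\varnothing;\boldsymbol{k}^\dagger)$. There is, however, an indexing slip in your transport identities. Since the connector sits on $(m_r,n_s)$, telescoping in the last left variable gives
\[
\sum_{a>m}\frac{1}{a}\cdot\frac{a!\,n_s!}{(a+n_s)!}=\frac{1}{n_s}\cdot\frac{m!\,n_s!}{(m+n_s)!},
\]
so the extra power lands on $n_s$, not on $n_1$. The correct pair is therefore
\begin{align*}
Z(k_1,\ldots,k_r,1;\,l_1,\ldots,l_s)&=Z(k_1,\ldots,k_r;\,l_1,\ldots,l_s{+}1),\\
Z(k_1,\ldots,k_r{+}1;\,l_1,\ldots,l_s)&=Z(k_1,\ldots,k_r;\,l_1,\ldots,l_s,1),
\end{align*}
i.e.\ one \emph{appends} to $\boldsymbol{l}$ and increments its \emph{last} entry, not its first. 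As you wrote them (prepending a $1$, or incrementing $l_1$), the identities do not follow from the recurrence you quoted, and even formally the iteration would assemble the reverse of $\boldsymbol{k}^\dagger$ on the right rather than $\boldsymbol{k}^\dagger$ itself. With this bookkeeping corrected, the argument is precisely the paper's proof.
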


The duality, conjectured by Hoffman \cite{H}, 
was proved by making change of variables in the iterated integral representation 
of the multiple zeta value (see Zagier \cite{Z}). 
In this note, we give a new proof of the duality without using iterated integrals. 
The proof is described briefly as follows. 
We introduce the \emph{connected sum} 
\begin{equation}\label{eq:Z(k;l)}
Z(\boldsymbol{k}; \boldsymbol{l}):=
\sum_{\substack{0=m_0<m_1<\cdots<m_r \\ 0=n_0<n_1<\cdots<n_s}}
\prod_{i=1}^r\frac{1}{m_i^{k_i}}\prod_{j=1}^s\frac{1}{n_j^{l_j}}
\cdot\frac{m_r!\cdot n_s!}{(m_r+n_s)!}
\end{equation}
for indices $\boldsymbol{k}=(k_1, \dots, k_r)$ and $\boldsymbol{l}=(l_1, \dots, l_s)$ 
(the name ``connected sum'' means that the two series 
\[\sum_{0=m_0<m_1<\cdots<m_r}\prod_{i=1}^r\frac{1}{m_i^{k_i}}
\quad \text{and} \quad 
\sum_{0=n_0<n_1<\cdots<n_s}\prod_{j=1}^s\frac{1}{n_j^{l_j}}
\]
are connected by the factor $\frac{m_r!\cdot n_s!}{(m_r+n_s)!}$; 
it is not related to the notion of the same name in topology). 
Then, by using an identity
\[
\sum_{a=m+1}^{\infty}\frac{1}{a}\cdot\frac{a!\cdot n!}{(a+n)!}
=\frac{1}{n}\cdot\frac{m!\cdot n!}{(m+n)!}
\]
$\mathrm{wt}(\boldsymbol{k})$ times,
we see that $\zeta(\boldsymbol{k})=Z(\boldsymbol{k}; \varnothing)$ is equal to 
$Z(\varnothing; \boldsymbol{k}^{\dagger})=\zeta(\boldsymbol{k}^{\dagger})$, 
where $a$ and $n$ are positive integers and $m$ is a non-negative integer.
For example, Euler's relation $\zeta(1, 2)=\zeta(3)$ which is the simplest 
non-trivial case of the duality is proved through the following steps:
\begin{alignat*}{3}
\zeta(1,2)&=\sum_{0<m_1<m_2}\frac{1}{m_1m_2^2}&\quad &=Z(1,2; \varnothing)\\
&=\sum_{0<m_1<m_2}\sum_{n=1}^{\infty}\frac{1}{m_1m_2}\cdot\frac{1}{n}\cdot\frac{m_2!\cdot n!}{(m_2+n)!}&\quad &=Z(1,1; 1) \\
&=\sum_{m_1=1}^{\infty}\sum_{n=1}^{\infty}\frac{1}{m_1}\cdot\frac{1}{n^2}\cdot\frac{m_1!\cdot n!}{(m_1+n)!}&\quad &=Z(1; 2) \\
&=\sum_{n=1}^{\infty}\frac{1}{n^3}&\quad &=Z(\varnothing; 3)\quad =\zeta(3).
\end{alignat*}
By the same argument, we can obtain new proofs of Ohno's relation \cite{O} 
and its $q$-analog due to Bradley \cite{B}. 
To recall the statements, we introduce some notation. 
Let $q$ be a real number satisfying $0<q<1$. 
For an admissible index $\boldsymbol{k}=(k_1, \dots, k_r)$, we define the \emph{$q$-multiple zeta value} 
$\zeta_q(\boldsymbol{k})$ by
\[
\zeta_q(\boldsymbol{k}):=\sum_{0 < m_1 < \cdots < m_r}
\frac{q^{(k_1-1)m_1+\cdots+(k_r-1)m_r}}{[m_1]_q^{k_1}\cdots [m_r]_q^{k_r}},
\]
where the $q$-integer $[m]_q$ is defined to be $\frac{1-q^m}{1-q}$ 
for a positive integer $m$. 
Then we have $\lim_{q \to 1}\zeta_q(\boldsymbol{k})=\zeta(\boldsymbol{k})$. 
For a non-negative integer $c$ and an admissible index $\boldsymbol{k}=(k_1, \dots, k_r)$, 
we consider a linear combination of $q$-multiple zeta values 
\[
S_q(\boldsymbol{k};c) := 
\sum_{\substack{c_1+\cdots+c_r=c \\ c_i \geq 0}}\zeta_q(k_1+c_1, \dots, k_r+c_r).
\]
Then the $q$-analog of Ohno's relation is the following:
\begin{theorem}[{Bradley \cite[Theorem 5]{B}}]\label{mainthm}
Let $c$ be a non-negative integer, 
$\boldsymbol{k}$ an admissible index and 
$\boldsymbol{k}^{\dagger}$ its dual. 
Then 
\[
S_q(\boldsymbol{k};c) = S_q(\boldsymbol{k}^{\dagger};c).
\]
\end{theorem}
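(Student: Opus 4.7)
The plan is to extend the connected-sum argument by introducing an extra formal parameter $x$ that encodes the Ohno shift through a generating function, and then to repeat the same ``transport'' scheme used for the duality.

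First, I would introduce a refined connected sum of the form
\[
Z_q(\boldsymbol{k};\boldsymbol{l};x) := \sum_{\substack{0=m_0<m_1<\cdots<m_r \\ 0=n_0<n_1<\cdots<n_s}}
\prod_{i=1}^r\frac{q^{(k_i-1)m_i}}{[m_i]_q^{k_i}}\prod_{j=1}^s\frac{q^{(l_j-1)n_j}}{[n_j]_q^{l_j}} \cdot C_q(m_r, n_s; x),
\]
where the connector $C_q(m,n;x)$ is a $q$-analog of $\frac{m!\,n!}{(m+n)!}$ twisted by an $x$-dependent $q$-hypergeometric factor, reducing to $\frac{[m]_q!\,[n]_q!}{[m+n]_q!}$ at $x=0$ so as to recover the duality connected sum. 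The factor should be tailored so that the empty-side specialisations give the boundary identities
\[
Z_q(\boldsymbol{k};\varnothing;x)=\sum_{c\ge 0}S_q(\boldsymbol{k};c)\,x^c, \qquad Z_q(\varnothing;\boldsymbol{l};x)=\sum_{c\ge 0}S_q(\boldsymbol{l};c)\,x^c,
\]
which would be checked by straightforward expansions of the connector as a geometric-like series in the $m_i$ and $n_j$ variables.

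Second, I would establish a $q$-transport lemma, the analog of the summation
\[
\sum_{a=m+1}^{\infty}\frac{1}{a}\cdot\frac{a!\,n!}{(a+n)!}=\frac{1}{n}\cdot\frac{m!\,n!}{(m+n)!}
\]
used in the duality proof, now for the full $x$-dependent connector and the $q$-weights $q^{(k-1)a}/[a]_q^k$. Iterating this step exactly $\mathrm{wt}(\boldsymbol{k})$ times converts $Z_q(\boldsymbol{k};\varnothing;x)$ into $Z_q(\varnothing;\boldsymbol{k}^\dagger;x)$: the bookkeeping of how many ones versus higher entries are moved at each step is identical to that in the duality argument, because the combinatorics of the dual index does not depend on $c$. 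Combining this with the boundary identities and extracting the coefficient of $x^c$ on both sides then yields the theorem.

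The main obstacle will be identifying the correct connector $C_q(m,n;x)$ so that a single clean identity handles the transport at every step. Both the $q$-deformation and the $x$-twist must interact compatibly with the summation in $a$, and one naturally expects a $q$-hypergeometric identity (e.g.\ of $q$-Gauss or $q$-Pfaff--Saalschütz type) to underlie the transport lemma. Once the correct $C_q$ is pinned down, the inductive structure of the duality proof should carry over essentially verbatim, and setting $x=0$ will recover the $q$-analog of the duality (the case $c=0$) as a special case.
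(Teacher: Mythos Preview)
Your overall strategy---introduce an $x$-deformed connected sum, prove transport identities, deduce $Z_q(\boldsymbol{k};\varnothing;x)=Z_q(\varnothing;\boldsymbol{k}^\dagger;x)$, and then compare Taylor coefficients---is precisely the route the paper takes. The gap is in where you put the $x$-dependence. You propose to keep the weight factors $q^{(k_i-1)m_i}/[m_i]_q^{k_i}$ untwisted and to load all of the $x$ into a connector $C_q(m_r,n_s;x)$. But when $\boldsymbol{l}=\varnothing$ one has $n_s=n_0=0$, so this connector depends only on $m_r$; expanding it in powers of $x$ can therefore only produce extra factors involving $m_r$. By contrast, the target generating series is
\[
\sum_{c\ge 0}S_q(\boldsymbol{k};c)\,x^c
=\sum_{0<m_1<\cdots<m_r}\;\prod_{i=1}^r\frac{q^{(k_i-1)m_i}}{[m_i]_q^{k_i-1}\bigl([m_i]_q-q^{m_i}x\bigr)},
\]
in which \emph{every} $m_i$ carries its own $x$-twist. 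Already the comparison of $r=1$ with $r=2$ shows that no connector depending on $(m_r,n_s)$ alone can produce this. The fix, and what the paper actually does, is to move the $x$ into each weight factor---replace $1/[m_i]_q$ by $1/([m_i]_q-q^{m_i}x)$, and likewise for the $n_j$---while simultaneously deforming the connector to $q^{m_rn_s}f_q(m_r;x)f_q(n_s;x)/f_q(m_r+n_s;x)$ with $f_q(m;x)=\prod_{h=1}^m([h]_q-q^hx)$.

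Once the $x$ is placed correctly, the transport step turns out to be far more elementary than the $q$-Gauss or $q$-Pfaff--Saalsch\"utz summations you anticipate: the summand in $a$ is a telescoping difference,
\[
\frac{1}{[a]_q-q^ax}\cdot\frac{q^{an}f_q(a;x)f_q(n;x)}{f_q(a+n;x)}
=\frac{q^n}{[n]_q}\Biggl(\frac{q^{(a-1)n}f_q(a-1;x)f_q(n;x)}{f_q(a-1+n;x)}-\frac{q^{an}f_q(a;x)f_q(n;x)}{f_q(a+n;x)}\Biggr),
\]
whose verification reduces to the identity $[a+n]_q-q^n[a]_q=[n]_q$. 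After this correction the remainder of your plan matches the paper verbatim.
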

By taking the limit $q \to 1$, one obtains Ohno's relation. 
The duality is the case $c=0$ of Ohno's relation.

\section{New proof of Theorem \ref{mainthm}}
\begin{definition}[Connected sum]
Let $\boldsymbol{k}=(k_1, \dots, k_r)$ and $\boldsymbol{l}=(l_1, \dots, l_s)$ be 
(possibly non-admissible or empty) indices and let $x$ be a real number satisfying $\lvert x\rvert<1$. 
We define a series $Z_q(\boldsymbol{k}; \boldsymbol{l}; x)$ by 
\begin{align*}
Z_q(\boldsymbol{k}; \boldsymbol{l}; x)
:=\sum_{\substack{0=m_0<m_1<\cdots<m_r \\ 0=n_0<n_1<\cdots<n_s}}
&\prod_{i=1}^r\frac{q^{(k_i-1)m_i}}{([m_i]_q-q^{m_i}x)[m_i]_q^{k_i-1}}
\prod_{j=1}^s\frac{q^{(l_j-1)n_j}}{([n_j]_q-q^{n_j}x)[n_j]_q^{l_j-1}}\\ 
&\cdot\frac{q^{m_rn_s}f_q(m_r;x)f_q(n_s;x)}{f_q(m_r+n_s;x)},
\end{align*}
where $f_q(m;x)=\prod_{h=1}^m([h]_q-q^hx)$ for a non-negative integer $m$. 
Since this is a non-empty series of positive terms, 
its value is well-defined as a positive real number (if convergent) or 
as positive infinity $+\infty$ (if divergent), independently of the order of the terms. 
\end{definition}

Note that there is a symmetry 
$Z_q(\boldsymbol{k}; \boldsymbol{l};x)=Z_q(\boldsymbol{l}; \boldsymbol{k};x)$. 
We also remark that, letting $q\to 1$ and $x\to 0$, 
we recover the series $Z(\boldsymbol{k};\boldsymbol{l})$ defined in \eqref{eq:Z(k;l)}. 

\begin{theorem}\label{key-thm}
Let $r$ and $s$ be non-negative integers and $k_1, \dots, k_r$, $l_1, \dots, l_s$ 
positive integers. If $s>0$, then we have
\[
Z_q(k_1, \dots, k_r, 1; l_1, \dots, l_s;x)=Z_q(k_1, \dots, k_r; l_1, \dots, l_s+1;x), 
\]
and if $r>0$, then we have
\[
Z_q(k_1, \dots, k_r+1; l_1, \dots, l_s;x)=Z_q(k_1, \dots, k_r; l_1, \dots, l_s, 1;x).
\]
\end{theorem}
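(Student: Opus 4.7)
By the built-in symmetry $Z_q(\boldsymbol{k};\boldsymbol{l};x)=Z_q(\boldsymbol{l};\boldsymbol{k};x)$, the two identities are interchanged by swapping the roles of $\boldsymbol{k}$ and $\boldsymbol{l}$, so it is enough to prove the first one. My plan is to isolate the outermost summation variable $a:=m_{r+1}$ on the left-hand side and reduce the theorem to a one-variable telescoping identity. Concretely: on the left, since $k_{r+1}=1$, the factor contributed by $a$ together with the connector equals
\[
\frac{1}{[a]_q-q^a x}\cdot\frac{q^{a n_s}\,f_q(a;x)f_q(n_s;x)}{f_q(a+n_s;x)};
\]
on the right, changing $l_s$ to $l_s+1$ multiplies the $j=s$ factor by $q^{n_s}/[n_s]_q$ while leaving the connector unchanged (as a function of $m_r$ and $n_s$). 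Writing $m=m_r$, $n=n_s$, and cancelling the parts that are identical on both sides, the theorem reduces to the single-variable identity
\begin{equation}\label{eq:one-var}
\sum_{a=m+1}^{\infty}\frac{q^{an}}{[a]_q-q^a x}\cdot\frac{f_q(a;x)}{f_q(a+n;x)}
=\frac{q^{(m+1)n}}{[n]_q}\cdot\frac{f_q(m;x)}{f_q(m+n;x)}
\end{equation}
for all integers $m\ge 0$ and $n\ge 1$. All terms being non-negative, Tonelli's theorem lets me interchange the $a$-sum with the remaining summations in $Z_q$, so \eqref{eq:one-var} implies the theorem (in the extended non-negative reals, so one side is finite if and only if the other is).

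To prove \eqref{eq:one-var} I telescope. Guided by the target value on the right, I define
\[
G(a):=\frac{q^{(a+1)n}}{[n]_q}\cdot\frac{f_q(a;x)}{f_q(a+n;x)},
\]
and verify that the summand on the left of \eqref{eq:one-var} equals $G(a-1)-G(a)$. The verification uses only the recursion $f_q(a;x)=([a]_q-q^a x)f_q(a-1;x)$ and the $q$-arithmetic identity $[a+n]_q-q^n[a]_q=[n]_q$ (immediate from $[a+n]_q=[n]_q+q^n[a]_q$); it is a short and essentially routine computation. The partial sums then collapse to $G(m)-G(N)$, and since $q^{(N+1)n}\to 0$ while the finite product $f_q(N;x)/f_q(N+n;x)=\prod_{h=N+1}^{N+n}([h]_q-q^h x)^{-1}$ remains bounded as $N\to\infty$, one has $G(N)\to 0$ and the series equals $G(m)$, which is precisely the right-hand side of \eqref{eq:one-var}.

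The main obstacle is guessing the right telescoping function $G$; the identity $[a+n]_q=[n]_q+q^n[a]_q$ is the crucial clue, pointing to the weighting $q^{(a+1)n}$ in $G$. A secondary technical point is that $Z_q$ can a priori take the value $+\infty$, but because every term of the series is non-negative, Tonelli handles the rearrangement uniformly, so no separate convergence argument is needed for the main identity itself.
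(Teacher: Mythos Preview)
Your proof is correct and follows essentially the same route as the paper: reduce to the first identity by symmetry, isolate the innermost sum over $a=m_{r+1}$, and evaluate it by telescoping using the identity $[a+n]_q=[n]_q+q^n[a]_q$, which yields exactly the paper's identity (your $G(a)$ coincides with the paper's telescoping function up to the constant factor $f_q(n;x)$). Your explicit remarks on Tonelli's theorem and on $G(N)\to 0$ are useful elaborations that the paper leaves implicit, but the core argument is identical.
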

\begin{proof}
The first equality follows from the telescoping sum 
\begin{align*}
&\sum_{a>m}\frac{1}{[a]_q-q^ax}\cdot\frac{q^{an}f_q(a;x)f_q(n;x)}{f_q(a+n;x)}\\
&=\frac{q^n}{[n]_q}\sum_{a>m}
\Biggl(\frac{q^{(a-1)n}f_q(a-1;x)f_q(n;x)}{f_q(a-1+n;x)}
-\frac{q^{an}f_q(a;x)f_q(n;x)}{f_q(a+n;x)}\Biggr) \\
&=\frac{q^n}{[n]_q}\cdot\frac{q^{mn}f_q(m;x)f_q(n;x)}{f_q(m+n;x)}, 
\end{align*}
applied to $m=m_r$, $n=n_s$ and $a=m_{r+1}$ in the definition of 
$Z_q(k_1, \dots, k_r, 1; l_1, \dots, l_s;x)$. 
The second equality is equivalent to the first by the symmetry of $Z_q$. 
\end{proof}

\begin{corollary}
For an admissible index $\boldsymbol{k}$, put 
\[
Z_q(\boldsymbol{k}; x):=\sum_{0<m_1<\cdots<m_r}\prod_{i=1}^r\frac{q^{(k_i-1)m_i}}{([m_i]_q-q^{m_i}x)[m_i]_q^{k_i-1}}.
\]
This is convergent, and we have
\begin{equation}
Z_q(\boldsymbol{k}; x)=Z_q(\boldsymbol{k}^{\dagger}; x).
\label{x-duality}\end{equation}
\end{corollary}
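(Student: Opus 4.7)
The plan is to reduce the corollary to $\mathrm{wt}(\boldsymbol{k})$ successive applications of Theorem \ref{key-thm}. First I would observe that $Z_q(\boldsymbol{k}; x) = Z_q(\boldsymbol{k}; \varnothing; x)$: when the second index is empty, $n_s = 0$ and $f_q(0;x) = 1$, so the connecting factor $q^{m_r n_s} f_q(m_r;x) f_q(n_s;x)/f_q(m_r+n_s;x)$ collapses to $1$, leaving exactly the defining series of $Z_q(\boldsymbol{k}; x)$. Combined with the symmetry $Z_q(\boldsymbol{l}; \boldsymbol{k}; x) = Z_q(\boldsymbol{k}; \boldsymbol{l}; x)$, it therefore suffices to produce a chain of equalities
\[
Z_q(\boldsymbol{k}; \varnothing; x) = \cdots = Z_q(\varnothing; \boldsymbol{k}^{\dagger}; x).
\]

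To build this chain, I would transport the index from the left slot to the right slot block by block, using the decomposition $\boldsymbol{k} = (\{1\}^{a_1-1}, b_1+1, \ldots, \{1\}^{a_t-1}, b_t+1)$ and processing from the right. For the rightmost block, apply the second equality of Theorem \ref{key-thm} exactly $b_t$ times: each step rewrites the last entry of the left index as $k_r + 1$ (legitimate as long as it is $\geq 2$) and appends a $1$ to the right, so after $b_t$ steps the block has shrunk to $\{1\}^{a_t}$ on the left and the right has become $(1, 1, \ldots, 1)$ with $b_t$ entries. Then apply the first equality $a_t$ times to strip these $a_t$ trailing $1$'s from the left while incrementing the last entry of the right each time, leaving $(\{1\}^{b_t - 1}, a_t + 1)$ on the right --- the first block of $\boldsymbol{k}^{\dagger}$. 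Iterating with the block $b_{t-1}+1$, and so on, after $t$ rounds (and $\mathrm{wt}(\boldsymbol{k})$ total applications) the left is empty and the right has become $\boldsymbol{k}^{\dagger}$.

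Convergence I expect to handle en passant: every intermediate expression is a series of positive terms, so the identities of Theorem \ref{key-thm} hold in $[0, +\infty]$ without any a priori convergence hypothesis, and the chain of equalities therefore yields both the asserted identity and the convergence of $Z_q(\boldsymbol{k}; x)$ as soon as one shows that the final value $Z_q(\boldsymbol{k}^{\dagger}; x)$ is finite. That finiteness is a routine estimate: for $|x|<1$ and $0<q<1$ the factors $([m]_q-q^m x)^{-1}$ are uniformly bounded in $m$, so the series is dominated by the standard $q$-multiple zeta value series attached to the admissible index $\boldsymbol{k}^{\dagger}$. The hardest part of the write-up will be the combinatorial bookkeeping in the iteration --- in particular, verifying at each step that the hypothesis ($s>0$ or $r>0$) of the equality being invoked holds --- but this is straightforward from the construction above.
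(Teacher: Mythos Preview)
Your approach is essentially the paper's: identify $Z_q(\boldsymbol{k};x)=Z_q(\boldsymbol{k};\varnothing;x)$, apply Theorem~\ref{key-thm} $\mathrm{wt}(\boldsymbol{k})$ times to reach $Z_q(\varnothing;\boldsymbol{k}^\dagger;x)$, and compare with $\zeta_q$ for convergence. The paper proves convergence first and then runs the chain, while you run the chain in $[0,+\infty]$ and verify finiteness at the end; either order is fine, and your block-by-block bookkeeping is correct.

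One small correction to your convergence sketch: a uniform bound $([m]_q-q^m x)^{-1}\leq C$ alone does \emph{not} dominate the series by $\zeta_q(\boldsymbol{k}^\dagger)$, because for entries $k_i=1$ the remaining factor $q^{(k_i-1)m_i}/[m_i]_q^{k_i-1}$ is just $1$ and the sum over $m_i$ diverges. You need the (equally routine) sharper bound $([m]_q-q^m x)^{-1}\leq \frac{1}{1-x}\,[m]_q^{-1}$ for $0<x<1$ (and the trivial bound for $x\leq 0$), which is exactly what the paper records; with this in hand your domination by $\zeta_q(\boldsymbol{k}^\dagger)$ goes through.
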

\begin{proof}
When $0<x<1$, we have $\frac{1}{[m]_q-q^m x}\leq\frac{1}{1-x}\cdot\frac{1}{[m]_q}$ and hence 
\[Z_q(\boldsymbol{k}; x)\leq \biggl(\frac{1}{1-x}\biggr)^r \zeta_q(\boldsymbol{k})<+\infty. \]
When $x\leq 0$, we readily see that $Z_q(\boldsymbol{k}; x) \leq \zeta_q(\boldsymbol{k})<+\infty$. 
Thus $Z_q(\boldsymbol{k}; x)$ is convergent. 
Moreover, 
by applying equalities in Theorem \ref{key-thm} $\mathrm{wt}(\boldsymbol{k})$ times, 
we see that 
\[
Z_q(\boldsymbol{k};x)
=Z_q(\boldsymbol{k}; \varnothing;x)=\cdots
=Z_q(\varnothing; \boldsymbol{k}^{\dagger};x)
=Z_q(\boldsymbol{k}^{\dagger};x)
\]
holds by the definitions of $Z_q(\boldsymbol{k}; x)$ and the dual index. 
\end{proof}

\begin{proof}[Proof of Theorem {\upshape\ref{mainthm}}]
Let us expand $Z_q(\boldsymbol{k};x)$ into a power series 
with respect to $x$. 
First, fix a sequence $0<m_1<\cdots<m_r$ and $i=1,\ldots,r$. Then one has 
\[
\frac{q^{(k_i-1)m_i}}{([m_i]_q-q^{m_i}x)[m_i]_q^{k_i-1}}=q^{(k_i-1)m_i}\sum_{c_i=0}^\infty\frac{q^{c_im_i}x^{c_i}}{[m_i]_q^{c_i+1}}\cdot\frac{1}{[m_i]_q^{k_i-1}}=\sum_{c_i=0}^{\infty}\frac{q^{(k_i+c_i-1)m_i}}{[m_i]_q^{k_i+c_i}}x^{c_i}.
\]
Taking the sum of products $\sum_{0<m_1<\cdots<m_r}\prod_{i=1}^r$ 
of this equality, we see that 
\[Z_q(\boldsymbol{k};x)=\sum_{c=0}^\infty 
S_q(\boldsymbol{k};c)x^c. \]
Hence, we get $S_q(\boldsymbol{k};c)=S_q(\boldsymbol{k}^{\dagger};c)$ 
by comparing the coefficients of $x^c$ in \eqref{x-duality}.
\end{proof}

\begin{rem}
When $q=1$ and $\boldsymbol{k}=(\{1\}^{r-1},2)$, our generating series 
$Z_q(\boldsymbol{k};x)$ becomes 
\[Z_1(\{1\}^{r-1},2;x)=\sum_{0<m_1<\cdots<m_r}
\frac{1}{(m_1-x)\cdots(m_r-x)m_r}. \]
Essentially the same series appears in the beginning of 
Granville's proof of the sum formula \cite{G}. 
We note that, however, our argument specialized to this case 
gives another proof of the sum formula: 
\begin{align*}
Z_1(\{1\}^{r-1},2;x)&=Z_1(\{1\}^{r-1},2;\varnothing;x)
=Z_1(\{1\}^{r};1;x)=\cdots\\
&=Z_1(\{1\}^{r-j};j+1;x)=\cdots=Z_1(\varnothing;r+1;x)=Z_1(r+1;x). 
\end{align*}
Here the connected sums $Z_1(\{1\}^{r-j};j+1;x)$ are the new ingredients. 
\end{rem}

\section*{Acknowledgement}
The authors sincerely thank the referee for helpful comments to improve the exposition.


\begin{thebibliography}{9}	
\bibitem{B}
D.~M.~Bradley, Multiple $q$-zeta values, 
\textit{J.~Algebra} \textbf{283} (2005), no.~2, 752--798.
\bibitem{G}
A.~Granville, 
A decomposition of Riemann's zeta function, 
in \textit{Analytic Number Theory}, London Mathematical Society Lecture Note Series 
Vol.~247, Cambridge University Press, Cambridge, 1997, 95--101. 
\bibitem{H}
M.~Hoffman, Multiple harmonic series, 
\textit{Pacific J.~Math.}, \textbf{152} (1992), no.~2, 275--290.
\bibitem{O}
Y.~Ohno, A generalization of the duality and sum formulas on the multiple zeta values, 
\textit{J.~Number Theory} \textbf{74} (1999), no.~1, 39--43.
\bibitem{Z}
D.~Zagier, Values of zeta functions and their applications,
\textit{First European Congress of Mathematics}, Vol.~II (Paris, 1992),
Progr.~Math., \textbf{120}, Birkh\"auser, Basel, 1994, 497--512.
\end{thebibliography}
\end{document}